\newtheorem{theorem}{Theorem}
\newtheorem{corollary}[theorem]{Corollary}
\newtheorem{conjecture}{Conjecture}
\newtheorem{lemma}{Lemma}
\theoremstyle{remark}
\numberwithin{equation}{section}
\author{Yudong Liu and Xiaoxia Wang$^*$}
\address{Department of Mathematics\\
    Shanghai University \\
    Shanghai 200444, P.\:R.\:China}
\email{lydshdx@163.com (Y. Liu), xiaoxiawang@shu.edu.cn (X. Wang)}
\thanks{This work is supported by National Natural Science Foundations of China (11661032).}
\thanks{$*$ Corresopnding author}
\title[Further $q$-analogues of the  (G.2) Supercongruence of Van Hamme]{Further $q$-analogues of the  (G.2) Supercongruence of Van Hamme}
\subjclass[2010]{Primary 33D15; Secondary 11A07, 11B65}
\keywords{basic hypergeometric series; supercongruences; $q$-congruences; $q$-analogue; cyclotomic polynomial. }
\begin{document}

\begin{abstract}
In 2015, Swisher generalized the (G.2)  supercongruence of Van Hamme to the modulus $p^4$.
In this paper, we first propose two $q$-analogues of Swisher's supercongruence and then a new $q$-congruence with parameters 
is present.
Furthermore, we prove a $q$-congruence modulo the fourth power of a cyclotomic polynomial, which was conjectured by the authors early.
\end{abstract}

\maketitle

\section{Introduction}
In 1997, Van Hamme\cite{Hamme} developed 13 mysterious $p$-adic analogues of Ramanujan-type $\pi$-formulas which were marked as (A.2)--(M.2), such as,
\begin{align}
(\text{G.2})\quad\quad&\sum_{k=0}^{(p-1)/4}(8k+1)\frac{(\frac{1}{4})_k^4}{k!^4}
\equiv p\frac{\Gamma_p(\frac 12)\Gamma_p(\frac 14)}{\Gamma_p(\frac 34)}
\pmod{p^3},  \quad\quad \text{if $p\equiv 1 \pmod{4}$}.
\label{eq:pram}
\end{align}
Here and throughout this paper, $p$ is an odd prime and $\Gamma_p(x)$ is the $p$-adic Gamma
function~\cite{Mor}.
Then the research of the Ramanujan-type congruence and supercongruence has caught a lot of authors' attention. Later, Swisher \cite[Theorem 1.2]{Swisher} proved a stronger version of \eqref{eq:pram}:
\begin{align}
\sum_{k=0}^{(p-1)/4}(8k+1)\frac{(\frac{1}{4})_k^4}{k!^4}
\equiv -(-1)^{\frac{p-1}{4}}p\Gamma_p(\tfrac 12)\Gamma_p(\tfrac 14)^2
\pmod{p^4},  \quad\quad\text{if $p\equiv 1 \pmod{4}$}.
\label{eq:swisher1}
\end{align}
Since $\Gamma_p(\frac 14)\Gamma_p(\frac 34)=-(-1)^{\frac{p-1}{4}}$ for $p\equiv 1 \pmod{4}$, the right-hand side of \eqref{eq:swisher1} is the same as that of \eqref{eq:pram}.

Recently, the authors\cite{LW} gave two $q$-analogues of \eqref{eq:pram} as follows: for positive integers $n\equiv 1 \pmod 4$,
\begin{align}
&\sum_{k=0}^{M}[8k+1] \frac{\left(q ; q^{4}\right)_k^4}{\left(q^4 ; q^{4}\right)_k^4}q^{2k} \equiv \frac{\left(q^{2} ; q^{4}\right)_{(n-1)/{4}}}{\left(q^{4} ; q^{4}\right)_{(n-1)/4}}[n] q^{(1-n) / 4}
\pmod {[n]\Phi_{n}(q)^{2}}\label{eq;1};\\
&\sum_{k=0}^{M}[8k+1]_{q^{2}}[8k+1]^{2} \frac{\left(q^{2} ; q^{8}\right)_k^{4}}{\left(q^{8} ; q^{8}\right)_k^{4}} q^{-4k }
\equiv -\frac{2[n]_{q^{2}}(q^{4};q^{8})_{(n-1)/4}}{(1+q^{2})(q^{8};q^{8})_{(n-1)/4}}q^{(3-n)/2} \pmod{[n]_{q^2} \Phi_{n}(q^2)^2} ,\label{eq;2}
\end{align}
here and in what follows $M=(n-1)/4$ or $n-1$.

In fact, Guo and Schlosser \cite[Theorem 2]{GS3} have presented a $q$-supercongruence: for even $d\geq4$ and integer $n\geq1$ with $n\equiv -1 \pmod d$,
\begin{align}
\sum_{k=0}^{n-1}[2dk+1]\frac{(q;q^d)_{k}^d}{(q^d;q^d)_{k}^d}q^{\frac{d(d-3)k}{2}}\equiv 0 \pmod {\Phi_{n}(q)^2},
\end{align}
which is just $q$-analogue of a companion of Van Hamme's (G.2) for $p\equiv 3 \pmod {4}$ when $d=4$.
The authors\cite{LW1} also proposed the following conjecture.
\begin{conjecture}\label{con1}
Let $n\equiv 1\pmod {4}$ be a positive integer. Then, modulo $ [n] \Phi_{n}(q)^{3}$,
\begin{align}
\sum_{k=0}^{(n-1)/2}[6 k+1] \frac{(q ; q^{4})_{k}(q ; q^{2})_{k}^{3}}{(q^{2} ; q^{2})_{k}(q^{4} ; q^{4})_{k}^{3}} q^{k^{2}+k}
\equiv \sum_{k=0}^{(n-1)/4}[8k+1] \frac{(q ; q^{4})_{k}^4}{(q^4 ; q^{4})_k^4}q^{2k}.
\end{align}
\end{conjecture}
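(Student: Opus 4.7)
The plan is to apply the creative microscoping method of Guo--Zudilin. I would introduce an auxiliary parameter $a$ and construct parametric extensions $L_n(a;q)$ and $R_n(a;q)$ of the two sums appearing in Conjecture~\ref{con1} by replacing one squared Pochhammer factor $(q;q^4)_k^2$ on each side with the symmetric pair $(aq;q^4)_k(q/a;q^4)_k$, together with matching denominator shifts so that the resulting expressions remain polynomial in $a$. At $a=1$ these reduce to the two sides of the conjecture.

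Step~1: show $L_n(a;q) \equiv R_n(a;q) \pmod{\Phi_n(q)}$. Setting $q^n = 1$ in each parametric sum, I would try to match them to a common closed form in $a$ using a terminating $_6\phi_5$ or Jackson's $_8\phi_7$ summation on the right, and a quadratic transformation of Rahman or Gasper--Rahman type on the left (needed to absorb the factor $q^{k^2+k}$ which is absent on the right).

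Step~2: evaluate $L_n(a;q)$ and $R_n(a;q)$ at the microscope specializations $a = q^{-n}$ and $a = q^n$. At each of these, a parametric Pochhammer numerator acquires a leading zero that truncates the summation to a shorter sum evaluable in closed form, and the two sides are shown to agree. Combined with Step~1, this yields
\[
L_n(a;q) \equiv R_n(a;q) \pmod{\Phi_n(q)(1-aq^n)(a-q^n)},
\]
whence setting $a=1$ gives the congruence modulo $\Phi_n(q)^3$, because $(1-q^n)^2$ contributes $\Phi_n(q)^2$.

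Step~3: strengthen to the full modulus $[n]\Phi_n(q)^3$. To upgrade the cyclotomic part to $\Phi_n(q)^4$, I would enlarge the parametric modulus by an additional factor $\Phi_n(aq^2)$ (or $\Phi_n(q^2/a)$), exploiting $\Phi_n(q^2) = \Phi_n(q)\Phi_{2n}(q)$ for odd $n$ so that this contributes an extra $\Phi_n(q)$ at $a=1$. The remaining factors $\Phi_d(q)$ with $d \mid n$ and $1 < d < n$ would be handled separately by a pairing argument on the summation indices under $q \mapsto \zeta_d$, and combined with the cyclotomic bound by a Chinese Remainder Theorem argument as in \cite{GS3}. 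The principal obstacle is the refinement in Step~3: the quadratic exponent $q^{k^2+k}$ on the left against the linear $q^{2k}$ on the right means the required parametric identity cannot come directly from a single standard $_{r}\phi_{s}$ transformation, and locating -- or constructing via a tailored WZ-pair certificate -- the quadratic identity that matches both parametric sums to the enlarged modulus is where the bulk of the technical difficulty will lie.
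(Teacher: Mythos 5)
Your Steps 1--2 follow the standard creative-microscoping template, and the parametric identities you hope for do exist: the evaluation of the parametric $[8k+1]$ sum at $b=q^{n}$ (equivalently $a=q^{\pm n}$ in your notation) comes from Watson's $_{8}\phi_{7}$ transformation, and the one for the $[6k+1]$ sum from Rahman's quadratic summation formula, together with Guo--Zudilin's Theorem 4.5; so your worry about ``locating the quadratic identity'' is resolved in the literature. Executed as written, however, Steps 1--2 give only the congruence modulo $\Phi_n(q)^{3}$ (plus the lower cyclotomic factors of $[n]$ by the root-of-unity argument). The genuine gap is Step 3, which is exactly where the difficulty of the statement sits, since $[n]\Phi_n(q)^{3}$ contains $\Phi_n(q)^{4}$. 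Your proposed device --- enlarging the parametric modulus by a factor $\Phi_n(aq^{2})$ --- is unsupported: you give no reason why either parametric sum should satisfy a congruence modulo $\Phi_n(aq^{2})$ (nor a precise meaning for such a bivariate congruence), and this is not the mechanism behind any of the known fourth-power refinements.

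The paper obtains the fourth power by a different device: a \emph{second} parameter $b$. For each of the two sums one has a congruence in $a$ (modulo $\Phi_n(q)(1-aq^{n})(a-q^{n})$ for the $[8k+1]$ sum, modulo $[n](1-aq^{n})(a-q^{n})$ for the $[6k+1]$ sum) and, separately, a congruence modulo $b-q^{n}$ coming from Watson's transformation resp. Rahman's summation; the Chinese remainder theorem for coprime polynomials glues these into a congruence modulo $\Phi_n(q)(1-aq^{n})(a-q^{n})(b-q^{n})$ whose right-hand side is the \emph{same} explicit quantity $[n]A_q(a,b,n)$ for both sums. Setting $b=1$ converts the factor $b-q^{n}$ into $1-q^{n}$, and this is where the extra power of $\Phi_n(q)$ comes from; the final limit $a\to 1$ is a nontrivial L'Hospital computation (third derivative of $h_q(a)$) which produces the correction term $[n]^{2}B(n,q)$. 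Both sides of the conjecture are thereby shown congruent, modulo $[n]\Phi_n(q)^{3}$, to the common closed form $\frac{(q^{2};q^{4})_{(n-1)/4}}{(q^{4};q^{4})_{(n-1)/4}}[n]q^{(1-n)/4}\bigl\{1+[n]^{2}B(n,q)\bigr\}$, which proves the conjecture. Note finally that a one-parameter ``common closed form'' as in your Step 1 cannot detect the $[n]^{2}B(n,q)$ correction, so even a complete execution of Steps 1--2 would not upgrade to the fourth power without an additional device of the above kind.
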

Here and in what follows, $(a;q)_n=(1-a)(1-aq)\cdots (1-aq^{n-1})$
is  the {\em $q$-shifted factorial} and
$(a_1,a_2,\ldots,a_m;q)_n=(a_1;q)_n (a_2;q)_n\cdots (a_m;q)_n$
denotes the product of $q$-shifted factorials for simplicity.
{\em$q$-integer} $[n]$ is defined as $[n]=[n]_q=(1-q^n)/(1-q)$ and
$\Phi_n(q)$ is the $n$-th {\em cyclotomic polynomial}.

For some other recent progress on congruences and $q$-congruences, see
\cite{Guo5,Guo6,Guo-diff,Guo7,GS20,GS2,Guo21,GuoZu,WP,WY0,WY,Liu,LP,NP,NP2,Sun,Zudilin,WY1}.
Especially, Guo and Zudlin \cite{GuoZu} introduced the `creative microscoping' method which is useful for proving $q$-congruences.

The first aim of this paper is to give  the following two $q$-supercongruences modulo the fourth power
of a cyclotomic polynomial which are the $q$-analogues of \eqref{eq:swisher1} and also the generalizations of the $q$-congruences \eqref{eq;1} and \eqref{eq;2}.
\begin{theorem}\label{thm;e}
Let $n\equiv 1 \pmod4$ be positive integer. Then, modulo $[n]\Phi_n(q)^3$, we have
\begin{align}
&\sum_{k=0}^{M}[8 k+1] \frac{(q ; q^{4})_k^4}{(q^{4} ; q^{4})_k^{4}} q^{2k}
 \equiv\frac{\left(q^{2 } ; q^{4}\right)_{(n-1) / 4}}{\left(q^{4} ; q^{4}\right)_{(n-1) / 4}}[n] q^{(1-n) / 4}\big\{1+[n]^2B(n,q)\big\};\label{e1}\\
&\sum_{k=0}^{M}[8k+1]_{q^{2}}[8k+1]^{2} \frac{\left(q^{2} ; q^{8}\right)_{k}^{4}}{\left(q^{8} ; q^{8}\right)_{k}^{4}} q^{-4k }\nonumber\\
&\quad\quad\quad \quad\quad\equiv -\frac{2[n]_{q^{2}}(q^{4};q^{8})_{(n-1)/4}}{(1+q^{2})(q^{8};q^{8})_{(n-1)/4}}q^{(5-n)/2}\big\{1+[n]_{q^2}^2B(n,q^2)\big\},\label{5}
\end{align}
here and throughout this paper \[B(n,q)=\frac{(n^2-1)(1-q)^2}{24}+\sum_{k=1}^{(n-1)/4}\frac{q^{4k-2}}{[4k-2]^2}.\]
\end{theorem}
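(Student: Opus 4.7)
The plan is to apply the \emph{creative microscoping} technique of Guo and Zudilin, extending the argument that the authors used in \cite{LW} for the mod $[n]\Phi_n(q)^2$ version \eqref{eq;1}. First, introduce a parameter $a$ and consider the parametric sum
\begin{align*}
S_n(a,q) := \sum_{k=0}^{M}[8k+1]\,\frac{(aq,q/a,q,q;q^4)_k}{(aq^4,q^4/a,q^4,q^4;q^4)_k}\,q^{2k},
\end{align*}
which reduces to the left-hand side of \eqref{e1} at $a=1$. The first task is to establish a parametric $q$-congruence of the form
\begin{align*}
S_n(a,q)\equiv T_n(a,q)\pmod{\Phi_n(q)(1-aq^n)(a-q^n)},
\end{align*}
where $T_n(a,q)$ is a closed-form expression (roughly, $[n]\,q^{(1-n)/4}$ times a ratio of $q$-shifted factorials symmetric under $a\leftrightarrow q/a$) obtained by specializing Watson's transformation of a terminating very-well-poised $_{8}\phi_{7}$ series to a balanced $_{4}\phi_{3}$, just as in the proof of \eqref{eq;1} but now with both parametric directions retained to second order in $a-1$.

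Second, observe the factorization $(1-aq^n)(a-q^n)=a(1-q^n)^2-q^n(a-1)^2$, whose value at $a=1$ is $(1-q)^2[n]^2$; since $\Phi_n(q)$ divides $[n]$ exactly once and is coprime to $1-q$, substituting $a=1$ immediately yields a congruence modulo $\Phi_n(q)^3$. The correction term $[n]^2 B(n,q)$ then emerges from Taylor-expanding the ratio $T_n(a,q)/T_n(1,q)$ to second order about $a=1$. The logarithmic derivative of the product $(aq^2;q^4)_{(n-1)/4}(q^2/a;q^4)_{(n-1)/4}$ contributes the explicit sum $\sum_{k=1}^{(n-1)/4}q^{4k-2}/[4k-2]^2$, while the constant piece $(n^2-1)(1-q)^2/24$ arises from a standard $q$-harmonic sum identity modulo $\Phi_n(q)$ (see, e.g., \cite{GuoZu,GS20}).

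Third, to strengthen the modulus from $\Phi_n(q)^3$ to $[n]\Phi_n(q)^3$, verify the congruence separately modulo $\Phi_d(q)$ for each divisor $d$ of $n$ with $1<d<n$; this reduces to a pairing-of-terms argument as in \cite{LW}, and combining all the congruences via coprimality of $\Phi_n(q)$ and $[n]/\Phi_n(q)$ delivers the full stated modulus. Congruence \eqref{5} follows by the same blueprint under $q\mapsto q^2$, with the parameter inserted into the $(q^2;q^8)_k$ factors.

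The principal obstacle is the second-order expansion bookkeeping: contributions from $(T_n'(1,q)/T_n(1,q))^2$, from $T_n''(1,q)/T_n(1,q)$, and from the nontrivial $(a-1)^2$ piece of $(1-aq^n)(a-q^n)$ must all be tracked, and the relevant $q$-harmonic congruences modulo $\Phi_n(q)$ must be invoked precisely, so that the combination collapses cleanly to the stated closed form of $B(n,q)$ without residual $q$-sums.
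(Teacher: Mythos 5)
Your plan stalls at the crucial point: the modulus of your parametric congruence is too weak to see the new term. If you only prove $S_n(a,q)\equiv T_n(a,q)\pmod{\Phi_n(q)(1-aq^n)(a-q^n)}$, then setting $a=1$ gives information modulo $\Phi_n(q)^3$ only. But in \eqref{e1} the correction enters as $[n]\cdot[n]^2B(n,q)$, which is already divisible by $\Phi_n(q)^3$, so modulo $\Phi_n(q)^3$ the statement collapses to the old congruence \eqref{eq;1}; and combining a $\Phi_n(q)^3$ statement with the mod $[n]$ verification only yields the modulus $\operatorname{lcm}(\Phi_n(q)^3,[n])=[n]\Phi_n(q)^2$, again just \eqref{eq;1}. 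No amount of careful Taylor bookkeeping of $T_n(a,q)$ about $a=1$ can recover what the parametric congruence does not control: to reach $[n]\Phi_n(q)^3$ you need $\Phi_n(q)^4$-level information, i.e.\ a parametric congruence modulo $\Phi_n(q)^2(1-aq^n)(a-q^n)$, and your proposal contains no device for gaining that extra factor of $\Phi_n(q)$. (Relatedly, once you substitute $a=1$ there is nothing left to expand; expanding before substitution only determines the coefficients up to the stated modulus, which is exactly the obstruction.)

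The paper gets the extra power by a two-parameter argument: besides $a$ it inserts a second parameter $b$, proves a congruence modulo $b-q^n$ directly from Watson's $_8\phi_7$ transformation (Lemma \ref{lemma2}), combines it with the known congruence modulo $\Phi_n(q)(1-aq^n)(a-q^n)$ (Lemma \ref{lemma1}) via the Chinese remainder theorem for the coprime polynomials $\Phi_n(q)$, $1-aq^n$, $a-q^n$, $b-q^n$, and only then sets $b=1$, so that the factor $b-q^n$ degenerates to $1-q^n$ and donates the needed extra $\Phi_n(q)$ (Theorem \ref{thm;5}, modulo $\Phi_n(q)^2(a-q^n)(1-aq^n)$). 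The term $B(n,q)$ then arises not from a Taylor expansion of a closed form but from the limit $a\to1$ of the CRT coefficient, computed by L'Hospital via the third derivative $h_q'''(1)$, after simplifying $A_q(a,1,n)$ with the $\Phi_n(q)$-reductions \eqref{eq;proof2}--\eqref{eq;proof6}. Two further points: the paper works with extra parameters $c,d$ (Theorem \ref{thm;1}), and \eqref{5} is the specialization $q\to q^2$, $c=d=q^{7/2}$, not merely $q\mapsto q^2$ applied to \eqref{e1}, so your closing remark would also need a separate parametric family for the second congruence; and your final mod-$[n]$ step is fine in spirit (the paper uses Lemma \ref{Lemma2}), but it only helps once the $\Phi_n(q)^4$ part is secured.
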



The second purpose of this paper is to prove Conjecture \ref{con1} by comparing the following result with the $q$-supercongruence \eqref{e1}.
\begin{theorem}\label{thm;4}
Let $n\equiv 1\pmod {4}$ be a positive integer. Then, modulo $ [n] \Phi_{n}(q)^{3}$,
\begin{align}
\sum_{k=0}^{(n-1) / 2}[6 k+1] \frac{(q ; q^{4})_{k}(q ; q^{2})_{k}^{3}}{\left(q^{2} ; q^{2}\right)_{k}(q^{4} ; q^{4})_{k}^{3}} q^{k^{2}+k}  &\equiv
\frac{\left(q^{2} ; q^{4}\right)_{(n-1)/4}}{\left(q^{4} ; q^{4}\right)_{(n-1)/4}}[n] q^{(1-n) / 4}\big\{1+[n]^2B(n,q)\big\}.
\end{align}
\end{theorem}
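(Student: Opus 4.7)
The plan is to adapt the creative microscoping strategy of Guo and Zudilin \cite{GuoZu}. Introduce an auxiliary parameter $a$ and consider a parametric generalization
\begin{align*}
S_n(a;q)=\sum_{k=0}^{(n-1)/2}[6k+1]\,\frac{(aq,q/a,q;q^4)_k\,(q;q^2)_k^3}{(aq^2,q^2/a;q^4)_k\,(q^2;q^2)_k\,(q^4;q^4)_k^{2}}\,q^{k^2+k},
\end{align*}
where the factors $(aq,q/a;q^4)_k/(aq^2,q^2/a;q^4)_k$ are inserted so that $S_n(1;q)$ recovers the left-hand side of Theorem \ref{thm;4}; the exact placement will be fixed by the summation identity identified below. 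The target is the strengthened $q$-congruence
\begin{align*}
S_n(a;q)\equiv \frac{(q^2;q^4)_{(n-1)/4}}{(q^4;q^4)_{(n-1)/4}}\,[n]\,q^{(1-n)/4}\pmod{(1-aq^n)(a-q^n)\Phi_n(q)},
\end{align*}
since the three pairwise coprime factors on the right collapse, at $a=1$, to a unit multiple of $\Phi_n(q)^3$.

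I would first verify the congruence at the two specializations $a=q^{-n}$ and $a=q^{n}$: in both cases $(aq,q/a;q^4)_k$ picks up a factor forcing the sum to truncate at $k=(n-1)/4$, and the resulting terminating $q$-hypergeometric series should evaluate in closed form via a variant of Watson's ${}_8\phi_7$ transformation or a Karlsson--Minton style sum. Next, modulo $\Phi_n(q)$, pair the $k$-th term with the $((n-1)/2-k)$-th term using $q^n\equiv 1$; the terms with $k>(n-1)/4$ cancel pairwise, again recovering the stated closed form. Chinese remaindering produces the congruence modulo $(1-aq^n)(a-q^n)\Phi_n(q)$. Specializing $a=1$ then upgrades this to a congruence modulo $\Phi_n(q)^3$, and the correction $[n]^2 B(n,q)$ appearing on the right-hand side of \eqref{e1} emerges, as in the proof of Theorem \ref{thm;e}, from the second-order Taylor expansion of $S_n(a;q)$ about $a=1$, a routine computation once the closed form is available.

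To strengthen the modulus from $\Phi_n(q)^3$ to $[n]\Phi_n(q)^3$, I would establish the analogous congruence modulo $\Phi_d(q)$ for every divisor $d>1$ of $n$ with $d\neq n$; this is carried out by restricting the summation range to its first $(d-1)/2$ terms and invoking the vanishing $(q;q^2)_{(d-1)/2}\equiv 0\pmod{\Phi_d(q)}$ valid for $d\equiv 1\pmod 4$. Combining the two congruences through the mutual coprimality of the distinct cyclotomic polynomials $\Phi_d(q)$ and $\Phi_n(q)$ yields the full modulus $[n]\Phi_n(q)^3$ stated in the theorem. Together with Theorem \ref{thm;e}, this immediately resolves Conjecture \ref{con1}.

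The principal obstacle is locating the right $q$-hypergeometric summation formula that evaluates $S_n(a;q)$ at $a=q^{\pm n}$: the simultaneous appearance of bases $q$, $q^2$, $q^4$ in the summand, together with the quadratic exponent $q^{k^2+k}$, places the sum outside the direct scope of a standard very-well-poised ${}_8\phi_7$ or Jackson sum. I expect that a preparatory quadratic (or cubic) transformation from Gasper--Rahman will first be needed to bring the series into a well-poised form in base $q^2$, after which a terminating summation becomes applicable. Once this identity is in hand, the remaining congruence steps follow the now-standard microscoping template.
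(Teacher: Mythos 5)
Your plan undershoots the modulus, and this is where it breaks. The theorem is stated modulo $[n]\Phi_n(q)^3$, which contains $\Phi_n(q)^4$; that extra power is precisely why the correction factor $\{1+[n]^2B(n,q)\}$ is present at all, since the prefactor already carries one factor $[n]$ and so the term involving $[n]^2B(n,q)$ is divisible by $\Phi_n(q)^3$ but not, in general, by $\Phi_n(q)^4$. Your proposed parametric congruence for $S_n(a;q)$ is taken modulo $(1-aq^n)(a-q^n)\Phi_n(q)$ with an $a$-independent right-hand side; at $a=1$ this can only ever give the weaker statement modulo $\Phi_n(q)^3$ (in which $B(n,q)$ is invisible), and it provides no mechanism whatsoever for the $[n]^2B(n,q)$ term to appear. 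The step ``the correction emerges from the second-order Taylor expansion of $S_n(a;q)$ about $a=1$'' is not available in your framework: a congruence whose right-hand side is constant in $a$ and whose third modulus factor is only $\Phi_n(q)$ does not control the second-order behaviour of $S_n(a;q)$ near $a=1$. To capture $B(n,q)$ one needs either an $a$-dependent right-hand side modulo $\Phi_n(q)^2(1-aq^n)(a-q^n)$ (as in Theorem \ref{thm;5}) or, as the paper actually does for this theorem, a second parameter $b$: the summand is $[6k+1](aq,q/a,q;q^2)_k(q/b;q^4)_k b^k q^{k^2+k}/((aq^4,q^4/a,q^4;q^4)_k(q^2b;q^2)_k)$, the congruence \eqref{eq;proof9} modulo $[n](a-q^n)(1-aq^n)$ comes from \cite[Theorem 4.5]{GuoZu}, the evaluation \eqref{eq;proof10} modulo $b-q^n$ comes from Rahman's quadratic summation \cite{Rahman}, and the Chinese remainder theorem produces the $a,b$-dependent quantity $A_q(a,b,n)$ in \eqref{eq;proof11}; the limits $b\to1$, then $a\to1$ via L'H\^{o}pital on $h_q(a)$ are what generate $[n]^2B(n,q)$ and, simultaneously, the full modulus $[n]\Phi_n(q)^3$. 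Your instinct that a Rahman-type quadratic summation is the missing evaluation is correct, but in the paper it is used at the $b$-specialization, not at $a=q^{\pm n}$.

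Two further, smaller problems. First, your route to the factor $[n]$ via divisors $d$ of $n$ relies on the claim $(q;q^2)_{(d-1)/2}\equiv 0\pmod{\Phi_d(q)}$, which is false (the first vanishing index is $(d+1)/2$), and divisors of $n\equiv1\pmod4$ need not themselves be $\equiv1\pmod4$; in the paper the $[n]$-part of the modulus is already built into \eqref{eq;proof9} and needs no divisor-by-divisor argument. Second, your placement of the $a$-factors (base $q^4$ in the numerator) differs from the one that actually matches the known summation machinery (base $q^2$ in the numerator, base $q^4$ in the denominator); you flagged this as tentative, but fixing it is essential for the $b=q^n$ evaluation to reduce to Rahman's formula. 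The final step you describe---comparing with \eqref{e1} to settle Conjecture \ref{con1}---does coincide with the paper's intent.
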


The rest of the paper is organized as follows.
We shall prove Theorem \ref{thm;e} by establishing a parameter generalized $q$-congruence which based on Watson's $_8\phi_7$ transformation and the Chinese remainder theorem for coprime polynomials in the next section.
In Section 3, we will give simple proof of Theorem \ref{thm;4}.
Finally, we will give another five $q$-analogues of \eqref{eq:swisher1} in Section 4.

\section{Proof of Theorem \ref{thm;e} }\label{sec:thm1}
The following  Watson's $_8\phi_7$ transformation (cf.\cite[Appendix~(II.18)]{GR})
\begin{align}
& _{8}\phi_{7}\!\left[\begin{array}{cccccccc}
a,& qa^{\frac{1}{2}},& -qa^{\frac{1}{2}}, & b,    & c,    & d,    & e,    & q^{-N} \\
& a^{\frac{1}{2}}, & -a^{\frac{1}{2}},  & aq/b, & aq/c, & aq/d, & aq/e, & aq^{N+1}
\end{array};q,\, \frac{a^2q^{N+2}}{bcde}
\right] \notag\\[5pt]
&\quad =\frac{(aq, aq/de;q)_N}
{(aq/d, aq/e;q)_N}
\,{}_{4}\phi_{3}\!\left[\begin{array}{c}
aq/bc,\ d,\ e,\ q^{-N} \\
aq/b,\, aq/c,\, deq^{-N}/a
\end{array};q,\, q
\right]\label{eq:8phi7}
\end{align}
will play an important role in our proof.
In fact, we can prove Theorem \ref{thm;e} by establishing the following parameter generalized $q$-congruence. Obviously, 
the special cases $c=d=q^{3/2}$ and  $q\rightarrow q^2, c=d=q^{7/2}$ of the following $q$-congruence are just the two results in Theorem \ref{thm;e}.
\begin{theorem}\label{thm;1}
Let $n\equiv 1 \pmod4$ be positive integer. Then, modulo $[n]\Phi_n(q)^3$, we have
\begin{align}
&\sum_{k=0}^{M} [8 k+1]\frac{(q;q^4)_k^4(cq;q^4)_k(qd;q^4)_k}{(q^4;q^4)_k^4(q^4/c;q^4)_k(q^4/d;q^4)_k} ({cd})^{-k} q^{5 k}\nonumber\\
&\quad\equiv\frac{\left(q^{2 } ; q^{4}\right)_{(n-1) / 4}}{\left(q^{4} ; q^{4}\right)_{(n-1) / 4}}[n] q^{(1-n) / 4}
\big\{1+[n]^2B(n,q)\big\}\nonumber\\
&\quad\quad\times \sum_{k=0}^{M}\frac{(q^{3}/cd;q^4)_k(q;q^4)_k^3}{(q^4;q^4)_k(q^4/c;q^4)_k(q^4/d;q^4)_k(q^{2};q^4)_k}q^{4k}
.\label{eq;thm1_1}
\end{align}
\end{theorem}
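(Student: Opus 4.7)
My plan is to prove Theorem~\ref{thm;1} by specializing Watson's $_8\phi_7$ transformation~\eqref{eq:8phi7} in base $q^4$ and matching the two sides modulo $\Phi_n(q)^4$, completing the argument via the Chinese Remainder Theorem for coprime polynomials.

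I would apply \eqref{eq:8phi7} in base $q^4$ with $a=q$, Watson's $b=cq$, Watson's $c=qd$, Watson's $d=q$, Watson's $e=q$, and $N=(n-1)/4$. Watson's left-hand side then becomes
\[
\sum_{k=0}^{(n-1)/4}[8k+1]\,\frac{(q;q^4)_k^{3}(cq;q^4)_k(qd;q^4)_k(q^{1-n};q^4)_k}{(q^4;q^4)_k^{3}(q^4/c;q^4)_k(q^4/d;q^4)_k(q^{n+4};q^4)_k}\Bigl(\frac{q^{n+5}}{cd}\Bigr)^{\!k},
\]
and, after the simplifications $(q^5;q^4)_N=[n](q;q^4)_N$ and $(q;q^4)_N(q^3;q^4)_N=(q;q^2)_{(n-1)/2}$, Watson's right-hand side becomes
\[
\frac{[n]\,(q;q^2)_{(n-1)/2}}{(q^4;q^4)_{(n-1)/4}^{\,2}}
\sum_{k=0}^{(n-1)/4}\frac{(q^{3}/(cd);q^4)_k(q;q^4)_k^{2}(q^{1-n};q^4)_k}{(q^4;q^4)_k(q^4/c;q^4)_k(q^4/d;q^4)_k(q^{2-n};q^4)_k}\,q^{4k}.
\]
Under the reductions $q^{1-n}\equiv q$, $q^{n+4}\equiv q^4$, $q^{2-n}\equiv q^2$ and $q^n\equiv 1$ modulo $\Phi_n(q)$, Watson's LHS collapses to the LHS of~\eqref{eq;thm1_1} and Watson's inner sum to the inner sum of~\eqref{eq;thm1_1}. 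The pairing $j\leftrightarrow n-j$ on exponents, via $1-q^{n-j}\equiv-q^{-j}(1-q^j)\pmod{\Phi_n(q)}$, then reduces the prefactor $[n](q;q^2)_{(n-1)/2}/(q^4;q^4)_{(n-1)/4}^{\,2}$ modulo $[n]\Phi_n(q)$ to $\frac{(q^2;q^4)_{(n-1)/4}}{(q^4;q^4)_{(n-1)/4}}[n]q^{(1-n)/4}$.

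The heart of the proof is the second-order correction. Setting $\varepsilon=q^n-1$ (divisible by $\Phi_n(q)$), the key expansion
\[
\frac{1-q^{1-n+4j}}{1-q^{1+4j}}\equiv 1+\frac{q^{1+4j}\varepsilon}{1-q^{1+4j}}-\frac{q^{1+4j}\varepsilon^{2}}{1-q^{1+4j}}\pmod{\Phi_n(q)^{3}},
\]
together with analogous second-order expansions of $(q^4;q^4)_k/(q^{n+4};q^4)_k$, of $q^{nk}=(1+\varepsilon)^k$, and of the prefactor ratio, produces $\varepsilon$-corrections on both sides of Watson's identity. When collected and compared, the first-order $\varepsilon$-terms cancel exactly, while the second-order $\varepsilon^{2}$-terms assemble, via partial-fraction identities for $\sum_{j}q^{1+4j}/(1-q^{1+4j})^{2}$ and the classical congruence $\sum_{k=1}^{n-1}q^k/[k]^2\equiv-(n^2-1)(1-q)^2/12\pmod{\Phi_n(q)}$, into exactly $[n]^{2}B(n,q)$ with $B(n,q)$ as defined in Theorem~\ref{thm;e}.

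Finally, since $[n]\Phi_n(q)^{3}=\Phi_n(q)^{4}\prod_{d\mid n,\,1<d<n}\Phi_d(q)$ is a product of pairwise coprime cyclotomic factors, the Chinese Remainder Theorem reduces the problem to the congruence modulo $\Phi_n(q)^{4}$ (established in the preceding steps) and the congruence modulo each $\Phi_d(q)$ with $d\mid n$, $1<d<n$. For the latter, the right-hand side of~\eqref{eq;thm1_1} vanishes because $\Phi_d(q)\mid[n]$, and the left-hand side vanishes at any primitive $d$-th root of unity by the standard reversal pairing $k\leftrightarrow(n-1)/4-k$ applied to the terminating well-poised sum. The main obstacle is the second-order $\varepsilon$-bookkeeping in the previous paragraph: cancelling all first-order $\varepsilon$-contributions and assembling the second-order residual into the precise closed form $\frac{(n^2-1)(1-q)^{2}}{24}+\sum_{k=1}^{(n-1)/4}q^{4k-2}/[4k-2]^{2}$ requires careful partial-fraction manipulation.
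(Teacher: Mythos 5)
There is a genuine gap. Your Watson specialization (base $q^4$, $a=q$, $b=cq$, $c=qd$, $d=e=q$, $N=(n-1)/4$) and the ensuing simplifications are correct, but they only reproduce the $a=1$, $b=q^n$ case of the identity, and from that point the entire content of the theorem is asserted rather than proved: the claims that ``the first-order $\varepsilon$-terms cancel exactly'' and that ``the second-order $\varepsilon^{2}$-terms assemble \ldots into exactly $[n]^2B(n,q)$'' are precisely what has to be established, and no computation is given. Worse, the precision you set up cannot reach the conclusion even in principle: the correction term contributes $[n]\cdot[n]^2B(n,q)\cdot(\text{inner sum})$, which is divisible by $\Phi_n(q)^3$, so it is invisible modulo $\Phi_n(q)^3$; to pin it down you must work modulo $\Phi_n(q)^4$, which means expanding every $n$-dependent factor to \emph{third} order in $\varepsilon=q^n-1$ (your displayed ``key expansion'' stops at $\varepsilon^2$, i.e.\ mod $\Phi_n(q)^3$), and, crucially, the comparison of the Watson prefactor $[n](q,q^3;q^4)_{(n-1)/4}/(q^4;q^4)_{(n-1)/4}^2$ with the target prefactor must be done to that same precision — your reduction ``modulo $[n]\Phi_n(q)$'' via $1-q^{n-j}\equiv -q^{-j}(1-q^j)$ discards exactly the terms that produce $B(n,q)$. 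A genuinely finer expansion of $(q;q)_{n-1}$-type products modulo $\Phi_n(q)^3$ would be needed here, and none is supplied. (Your treatment of the divisors $d\mid n$, $1<d<n$, by a bare ``reversal pairing'' is also only a gesture; the paper covers this by citing its parametric Lemma 1.)

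For contrast, the paper avoids all of this bookkeeping by \emph{keeping} a parameter $a$ in Watson's transformation (taking Watson's $d=q/a$, $e=aq$, which yields a congruence modulo $b-q^n$ with $a$ free), combining it via the Chinese remainder theorem with a previously proved parametric congruence modulo $\Phi_n(q)(1-aq^n)(a-q^n)$, setting $b=1$ to gain one more factor $\Phi_n(q)$, and then letting $a\to1$: since $(1-aq^n)(a-q^n)\to(1-q^n)^2$, the modulus becomes $\Phi_n(q)^4$ automatically, and the whole correction $[n]^2B(n,q)$ drops out of a single L'H\^opital computation of $h_q'''(1)$. If you want to salvage your route, you would have to carry out the third-order $\varepsilon$-expansion of both sides and of the prefactor explicitly and verify the resulting harmonic-sum identities; as written, the proof's core is missing.
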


In order to prove Theorem \ref{thm;1}, we first recall the following two parametric $q$-congruences which the authors \cite[Lemmas 1 and 2]{LW2} have proved.
\begin{lemma}\label{Lemma2}
Let $d, n$ be positive integers with $\gcd(d, n)=1$. Let $r$ be an integer and let $a$, $b$, $c$ and $e$ be indeterminates. Then, modulo $[n]$,
\begin{align}
&\sum_{k=0}^{m_1} [2 d k+r]\frac{(q^r,cq^r,eq^r,q^r/b,aq^{r}, q^{r}/a; q^{d})_{k}}{(q^d,q^d/c,q^d/e,q^db,q^{d}/a,aq^{d};q^{d})_{k}}
\left(\frac{b}{ce}\right)^k q^{(2d-3 r) k}\equiv 0\label{eq;Lemma1};\\
&\sum_{k=0}^{n-1} [2 d k+r]\frac{(q^r,cq^r,eq^r,q^r/b,aq^{r}, q^{r}/a; q^{d})_{k}}{(q^d,q^d/c,q^d/e,q^db,q^{d}/a,aq^{d};q^{d})_{k}}
\left(\frac{b}{ce}\right)^k q^{(2d-3 r) k}\equiv 0\label{eq;Lemma2},
\end{align}
where $0\le m_1 \le n-1$ and $dm_1 \equiv -r \pmod n$.
\end{lemma}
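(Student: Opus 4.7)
Denote the summand by $T_k$. The strategy is first to reduce \eqref{eq;Lemma2} to \eqref{eq;Lemma1}, and then to prove \eqref{eq;Lemma1} on each cyclotomic factor of $[n]$ by a pair-and-cancel identity enabled by the very-well-poised structure of $T_k$.

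For the reduction, observe that for $k\ge m_1+1$ the factor $(q^r;q^d)_k$ in the numerator of $T_k$ contains the subfactor $1-q^{r+dm_1}$. The hypothesis $dm_1+r\equiv 0\pmod n$ makes this divisible by $1-q^n=(1-q)[n]$, hence by $[n]$. So $T_k\equiv 0\pmod{[n]}$ whenever $m_1<k\le n-1$, and the truncations at $m_1$ and at $n-1$ agree modulo $[n]$. To prove \eqref{eq;Lemma1} itself, I would factor $[n]=\prod_{\delta\mid n,\,\delta>1}\Phi_\delta(q)$ into pairwise coprime polynomials and, by the Chinese Remainder Theorem for polynomials, reduce to the congruence modulo each $\Phi_\delta(q)$. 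For such a $\delta$, setting $q=\zeta$ a primitive $\delta$-th root of unity, the hypotheses $\gcd(d,n)=1$ and $dm_1+r\equiv 0\pmod n$ give $\gcd(d,\delta)=1$ and $q^{dm_1+r}=1$.

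The heart of the argument is the pair identity $T_k+T_{m_1-k}\equiv 0\pmod{\Phi_\delta(q)}$ for $0\le k\le m_1$. Given this, if $m_1$ is odd the summation pairs up perfectly, while if $m_1$ is even the case $k=m_1/2$ of the identity yields $2T_{m_1/2}\equiv 0$, hence $T_{m_1/2}\equiv 0$, and the remaining terms pair up. To prove the pair identity I would compute $T_{m_1-k}/T_k$ at $q=\zeta$. First, $[2d(m_1-k)+r]$ becomes $-q^{-(2dk+r)}[2dk+r]$, supplying the crucial minus sign from $q^{2(dm_1+r)}=1$. Then for each of the six top $q$-shifted factorials $(xq^r;q^d)_{m_1-k}$ with $x\in\{1,c,e,1/b,a,1/a\}$, I would split $(xq^r;q^d)_{m_1-k}=(xq^r;q^d)_{m_1}/(xq^{r+d(m_1-k)};q^d)_k$, use $q^{dm_1+r}=1$ to rewrite $xq^{r+d(m_1-k)}$ as $xq^{-dk}$, and then reindex the resulting product into a scalar multiple of the VWP-paired denominator factorial $(q^d/(xq^r);q^d)_k$. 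The six bottom factorials are handled symmetrically. The $k$-independent pieces $(xq^r;q^d)_{m_1}$ and $(yq^d;q^d)_{m_1}$ cancel when forming the ratio.

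The main obstacle is the bookkeeping of the residual powers of $q,a,b,c,e$ produced by these twelve ratios together with the monomial $(b/ce)^k q^{(2d-3r)k}$ in $T_k$. What makes the computation close up cleanly is the very-well-poised pairing: each top parameter $xq^r$ is matched with the bottom $q^d/x$ so that $xq^r\cdot q^d/x=q^{d+r}$ is independent of the five free parameters, and the contributions to $T_{m_1-k}/T_k$ from the free variables cancel in pairs, leaving precisely the factor $-1$ required by the pair identity.
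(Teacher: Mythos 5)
The paper does not actually prove this lemma here; it is quoted from the authors' earlier work \cite{LW2}, where the proof follows the by-now standard root-of-unity argument of Guo--Schlosser type. Your overall strategy (factor $[n]$ into cyclotomic pieces, evaluate at a primitive $\delta$-th root of unity $\zeta$ for each $\delta\mid n$ with $\delta>1$, and exploit the very-well-poised pairing) is the right framework, and your computation of the ratio of paired terms is essentially the correct calculation. However, there is a genuine gap: you carry out the pairing and the truncation argument with the index $m_1$ (defined by $dm_1\equiv -r\pmod n$) for \emph{every} divisor $\delta$, and this only works when $\delta=n$. For a proper divisor $\delta$ the relevant midpoint is $m_\delta$, the residue with $0\le m_\delta\le\delta-1$ and $dm_\delta\equiv -r\pmod{\delta}$; the identity $T_k+T_{m_1-k}\equiv 0\pmod{\Phi_\delta(q)}$ is false in general because $T_k$ and $T_{m_1-k}$ sit in different length-$\delta$ blocks of the sum and pick up unequal block factors. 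Likewise, your reduction of \eqref{eq;Lemma2} to \eqref{eq;Lemma1} fails modulo $\Phi_\delta(q)$ for $\delta<n$: for $m_1<k\le n-1$ the factor $1-q^{r+dm_1}$ in the numerator is indeed divisible by $[n]$, but for $k\ge\delta$ the denominator $(q^d;q^d)_k$ also acquires factors divisible by $\Phi_\delta(q)$, and after cancellation the term need not vanish at $\zeta$ (e.g.\ $n=15$, $d=4$, $r=1$, $\delta=3$, $k=12$: here $m_1=11$, $m_3=2$, $k\equiv 0\le m_3\pmod 3$, and the term is generically nonzero at a primitive cube root of unity).

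The repair, which is what \cite{LW2} (following Guo--Schlosser) does, is to fix $\delta$ and $\zeta$, write $c_\zeta(k)$ for the term, and establish three facts: (i) $c_\zeta(m_\delta-k)=-c_\zeta(k)$ for $0\le k\le m_\delta$ (your ratio computation, with $m_\delta$ in place of $m_1$, using $\zeta^{dm_\delta+r}=1$); (ii) $c_\zeta(k)=0$ whenever the residue of $k$ modulo $\delta$ exceeds $m_\delta$; and (iii) the multiplicativity $c_\zeta(\ell\delta+k)=c_\zeta(\ell\delta)\,c_\zeta(k)$, which requires a limiting argument since both $(q^r;q^d)_k$ and $(q^d;q^d)_k$ vanish at $q=\zeta$ for $k\ge\delta$. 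One then splits $\sum_{k=0}^{n-1}$ and $\sum_{k=0}^{m_1}$ into consecutive blocks of length $\delta$ (using $m_1\equiv m_\delta\pmod{\delta}$, which follows from $\gcd(d,\delta)=1$), and each block vanishes by (i) and (ii). Your argument as written proves the lemma only for $n$ prime, where $[n]=\Phi_n(q)$ and $m_1=m_n$, so the pairing around $m_1$ and the term-by-term vanishing are then legitimate.
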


\begin{lemma}\label{lemma1}
Let $n>1$, $d\ge 2$ , $r$ be  integers with $\gcd(r,d)=1$ and $n\equiv r \pmod {d}$ such that $n+d-nd\le r\le n$. Then, modulo $\Phi_{n}(q)(1-aq^n)(a-q^n)$,
\begin{align}
&\sum_{k=0}^{(n-r)/d} [2 d k+r]\frac{(q^r, cq^r, eq^r, q^r/b, aq^{r}, q^{r}/a ; q^{d})_{k}}{(q^d, q^d/c, q^d/e, q^db, q^{d}/a, aq^{d}; q^{d})_{k}} (\frac{b}{ce})^k q^{(2d-3 r) k} \nonumber\\
&\quad\equiv\frac{\left(q^{2 r}/b ; q^{d}\right)_{(n-r) / d}}{\left(bq^{d} ; q^{d}\right)_{(n-r) / d}}[n] \big(\frac{b}{q^r}\big)^{(n-r) / d} \sum_{k=0}^{M}\frac{(q^{d-r}/ce, q^{r}/b, aq^{r},q^{r}/a;q^d)_k}{(q^d, q^d/c, q^d/e, q^{2r}/b;q^d)_k}q^{dk}.\label{eq;thm9_1}
\end{align}
\end{lemma}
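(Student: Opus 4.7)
The plan is to apply Watson's ${}_8\phi_7$ transformation \eqref{eq:8phi7} at the two microscope values $a = q^{\pm n}$ and then combine the results with Lemma \ref{Lemma2} via the Chinese Remainder Theorem for coprime polynomials. The three factors $\Phi_n(q)$, $1-aq^n$, and $a-q^n$ are pairwise coprime (the first lies in $\mathbb Q[q]$; the other two are primitive of degree one in $a$ with distinct zero sets $a=q^{\mp n}$), so it suffices to establish \eqref{eq;thm9_1} modulo each factor separately.

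The congruence modulo $\Phi_n(q)$ is essentially free. Taking $m_1=(n-r)/d$ in Lemma \ref{Lemma2} (which satisfies $dm_1\equiv -r\pmod n$), the left-hand side of \eqref{eq;thm9_1} is $\equiv 0\pmod{[n]}$, hence $\equiv 0\pmod{\Phi_n(q)}$; the right-hand side carries an explicit factor $[n]$ and vanishes similarly.

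The real work is to show that \eqref{eq;thm9_1} becomes an \emph{equality} after the specializations $a=q^{\pm n}$. Take $a=q^{-n}$: then $aq^r = q^{r-n} = q^{-(n-r)} = q^{-dN}$ with $N=(n-r)/d$, so the left sum terminates and has the shape of a very-well-poised ${}_8\phi_7$ in base $q^d$ with leading parameter $q^r$, free upper parameters $(cq^r, eq^r, q^r/b, q^r/a)$, and terminator $aq^r$. The balanced triple contributes $[2dk+r]/[r]$, the six denominator entries match one-for-one, and a direct computation of Watson's argument $(q^r)^2 q^{d(N+2)}/(cq^r\cdot eq^r\cdot (q^r/b)\cdot (q^r/a))$ collapses, under $a=q^{-n}$, to $(b/ce)q^{2d-3r}$. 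Invoking \eqref{eq:8phi7} then expresses the left-hand side as a prefactor times a ${}_4\phi_3$ whose upper parameters are $q^{d-r}/(ce),\, q^r/b,\, q^{r+n},\, q^{r-n}$ and lower parameters $q^d/c,\, q^d/e,\, q^{2r}/b$; this is precisely the right-hand sum of \eqref{eq;thm9_1} evaluated at $a=q^{-n}$. The accompanying Watson prefactor $[r](q^{r+d}, bq^{d-r-n};q^d)_N/(bq^d, q^{d-n};q^d)_N$ simplifies to the stated normalization $(q^{2r}/b;q^d)_N[n](b/q^r)^N/(bq^d;q^d)_N$ by combining $[r](q^{r+d};q^d)_N = [n](q^r;q^d)_N$ with an elementary sign-reversal of $(q^{d-n};q^d)_N$. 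The case $a=q^n$ is identical after exchanging the roles of the upper parameters $aq^r$ and $q^r/a$ (at $a=q^n$ the terminator becomes $q^r/a$), an exchange permitted because Watson's ${}_8\phi_7$ is symmetric in its five free upper parameters.

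The main obstacle is this last prefactor collapse: correctly pairing the six nontrivial numerator Pochhammers of the lemma's summand with Watson's labeled parameters in base $q^d$, confirming that the argument reduces to $(b/ce)^k q^{(2d-3r)k}$ only after the substitution $a=q^{\pm n}$, and then carrying out a finite but delicate algebraic simplification to the specific normalization on the right of \eqref{eq;thm9_1}. Once this is in hand, the Chinese Remainder Theorem delivers the congruence modulo $\Phi_n(q)(1-aq^n)(a-q^n)$.
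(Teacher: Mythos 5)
The paper does not actually prove Lemma~\ref{lemma1} here: it is imported verbatim from \cite[Lemma 2]{LW2}. Your proposal is correct and reconstructs exactly the standard creative-microscoping argument used there (and throughout \cite{GuoZu}): verify that \eqref{eq;thm9_1} is an \emph{identity} at $a=q^{\pm n}$ via Watson's transformation \eqref{eq:8phi7} in base $q^d$ with the parameter matching and prefactor simplification you describe (both of which check out, including the collapse of the argument to $(b/ce)q^{2d-3r}$ and the reversal identities giving $[n](b/q^r)^{(n-r)/d}(q^{2r}/b;q^d)_{(n-r)/d}$), handle the modulus $\Phi_n(q)$ via Lemma~\ref{Lemma2} with $m_1=(n-r)/d$, and combine by coprimality of the three factors. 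The only cosmetic quibble is that the prefactor simplification requires sign-reversing \emph{both} $(bq^{d-r-n};q^d)_{(n-r)/d}$ and $(q^{d-n};q^d)_{(n-r)/d}$, not just the latter, but the stated end result is right.
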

We also need the following result.
\begin{lemma}\label{lemma2}
Let $n\equiv 1 \pmod4$ be positive integer. Then, modulo $b-q^{n}$,
\begin{align}
&\sum_{k=0}^{(n-1)/4} [8 k+1]\frac{(q, q/b, qc, qd, aq, q/a ; q^{4})_{k}}{(q^4, q^4b, q^4/c, q^4/d, q^{4}/a, aq^{4}; q^{4})_{k}}  (cd)^{-k}q^{5k}b^k \nonumber\\
&\quad\equiv[n]\frac{\left(q ,q^{3}; q^{4}\right)_{(n-1) / 4}}{\left(aq^{4},q^4/a ; q^{4}\right)_{(n-1) / 4}} \sum_{k=0}^{(n-1)/4}\frac{(q^{3}/cd, q/b, aq,q/a;q^4)_k}{(q^4,q^4/c,q^4/d,q^4,q^{2}/b;q^4)_k}q^{4k}. \label{eq;lemma3}
\end{align}
\end{lemma}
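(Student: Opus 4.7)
The plan is to work modulo $b-q^n$ by substituting $b=q^n$ directly; this reduces Lemma~\ref{lemma2} to a polynomial identity in $a,c,d,q$. After the substitution, the factor $(q/b;q^4)_k = (q^{1-n};q^4)_k$ becomes $(q^{-4N};q^4)_k$ with $N=(n-1)/4$, so the series on both sides terminate naturally at $k=N$, and I can hope to convert the left-hand side into the right-hand side by a single basic hypergeometric transformation.

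The key step is to recognize the resulting left-hand side as a terminating very-well-poised $_8\phi_7$ with base $q^4$, and apply Watson's transformation \eqref{eq:8phi7} read with $q$ replaced by $q^4$. Concretely, I would choose $A=q$, $B=qc$, $C=qd$, $D=aq$, $E=q/a$, and $N=(n-1)/4$. A quick check shows the very-well-poised factor becomes $(1-q^{8k+1})/(1-q)=[8k+1]$, the six numerator $q^4$-Pochhammer symbols $(q,qc,qd,aq,q/a,q^{-4N};q^4)_k$ line up with the Watson template, the six denominator ones $(q^4,q^4/c,q^4/d,q^4/a,aq^4,q^{4N+5};q^4)_k$ are the expected $Aq^4/B,\ldots,Aq^{4N+4}$, and the summation argument $(b/cd)^k q^{5k}$ evaluated at $b=q^n$, namely $(q^{n+5}/cd)^k$, agrees with $A^2 q^{4(N+2)}/(BCDE)$.

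Applying Watson then rewrites the left-hand side as a $_4\phi_3$ at argument $q^4$ with numerator parameters $q^3/(cd),aq,q/a,q^{1-n}$ and denominator parameters $q^4/c,q^4/d,q^{2-n}$; this is precisely the sum on the right-hand side of \eqref{eq;lemma3} after the same substitution $b=q^n$. The remaining bookkeeping is to simplify the prefactor produced by Watson, namely $(q^5,q^3;q^4)_N/(q^4/a,aq^4;q^4)_N$, to the claimed $[n]\,(q,q^3;q^4)_N/(aq^4,q^4/a;q^4)_N$; this reduces to the single observation $(q^5;q^4)_N/(q;q^4)_N = (1-q^{4N+1})/(1-q) = [n]$, since $4N+1=n$.

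The main obstacle is purely notational: one must correctly pair the six numerator/denominator parameters so that the VWP factor realizes $[8k+1]$ rather than $[4k+1]$, which forces the base of Watson's formula to be $q^4$ (not $q^2$) and $A=q$. Once the identification is made, the lemma drops out of Watson's transformation plus the one-line Pochhammer simplification above; no further creative-microscoping style reductions modulo additional factors are needed, because the modulus $b-q^n$ is handled by the initial direct substitution.
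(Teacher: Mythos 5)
Your proposal is correct and is essentially the paper's own proof: the authors likewise set $b=q^n$ and apply Watson's transformation \eqref{eq:8phi7} with $q\to q^4$ and the same parameter choices (their $d,e$ are your $E,D$, which is immaterial by symmetry), obtaining the $_4\phi_3$ on the right with the prefactor simplification $(q^5;q^4)_{(n-1)/4}=[n](q;q^4)_{(n-1)/4}/1$ handled exactly as you describe.
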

\begin{proof}
Letting  $q\rightarrow q^4$, $a=q$, $b=qc$, $c=qd$, $d=q/a$, $e=aq$ and $N=(n-1)/4$ in Watson's formula \eqref{eq:8phi7}, we get
\begin{align*}
&\sum_{k=0}^{(n-1)/4} [8 k+1]\frac{(q, q^{1-n}, qc, qd, aq, q/a; q^{4})_{k}}{(q^4, q^{4-n}, q^4/c, q^4/d, q^{4}/a, aq^{4}; q^{4})_{k}} (cd)^{-k} q^{(5+n)  k} \nonumber\\
&\quad\equiv[n]\frac{\left(q ,q^{3}; q^{4}\right)_{(n-1) / 4}}{\left(aq^{4},q^4/a ; q^{4}\right)_{(n-1) / 4}} \sum_{k=0}^{(n-1)/4}\frac{(q^{3}/cd, q^{1-n}, aq,q/a;q^4)_k}{(q^4,q^4/c,q^4/d,q^4,q^{2-n};q^4)_k}q^{4k},
\end{align*}
which is just the special case $b=q^n$ of \eqref{eq;lemma3}. This establishes Lemma \ref{lemma2}.
\end{proof}

We now give a parametric version of Theorem \ref{thm;1}.
\begin{theorem}\label{thm;5}
Let $n\equiv 1 \pmod4$ be positive integer. Then, modulo $\Phi_{n}(q)^2(a-q^n)(1-aq^n)$,
\begin{align}\label{eq;thm5}
&\sum_{k=0}^{(n-1)/4} [8 k+1]\frac{(q;q^4)_k^2(cq, dq, qa, q/a;q^4)_k}{(q^4;q^4)_k^2(q^4/c, q^4/d, q^4a, q^4/a;q^4)_k} ({cd})^{-k} q^{5 k} \nonumber\\
&\quad\equiv \left\{q^{(1-n) / 4}[n]\frac{\left(q^{2 }; q^{4}\right)_{(n-1) / 4}}{\left(q^{4} ; q^{4}\right)_{(n-1) / 4}} + q^{(1-n)/4}[n]\frac{h_q(a)}{(1-a^2)(1-a^n)(q^2,q^4;q^4)_{(n-1)/4}}\right\}\nonumber\\
&\quad\quad\times\sum_{k=0}^{(n-1)/4}\frac{(q, q^{3}/cd, aq, q/a;q^4)_k}{(q^{2},q^4,q^4/c,q^4/d;q^4)_k}q^{4k},
\end{align}
where
\[h_q(a)=(1-aq^n)(a-q^n)\left\{\left(q^{2 } ; q^{4}\right)_{(n-1) / 4}^2(1-a^n)-n(aq^2,q^2/a;q^4)_{(n-1)/4}(1-a)a^{(n-1)/2}\right\}.\]
\end{theorem}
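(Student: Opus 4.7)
The plan is to invoke the Chinese Remainder Theorem for coprime polynomials on the three pairwise coprime factors $\Phi_n(q)^2$, $a-q^n$, and $1-aq^n$ of the modulus (the first being a polynomial in $q$ only, the latter two distinct linear polynomials in $a$). It therefore suffices to verify the claimed congruence modulo each factor in turn.

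The congruences modulo $a-q^n$ and modulo $1-aq^n$ are essentially immediate: since $h_q(a)$ carries the explicit factor $(1-aq^n)(a-q^n)$, the second summand in the bracketed prefactor on the right-hand side of Theorem~\ref{thm;5} vanishes modulo each of these two linear polynomials, and the residual identity is exactly the specialisation $d=4$, $r=1$, $b=1$ of Lemma~\ref{lemma1} (after relabelling $c,e$ there as $c,d$), whose conclusion modulo $\Phi_n(q)(1-aq^n)(a-q^n)$ subsumes both of the required congruences.

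The main technical step is the congruence modulo $\Phi_n(q)^2$. My approach is to apply Lemma~\ref{lemma2} (Watson's $_8\phi_7$ transformation, written with the auxiliary parameter $b$) and then specialise $b=1$. Because Lemma~\ref{lemma2} holds modulo $b-q^n$, its two sides differ by an explicit multiple of $b-q^n$, so evaluation at $b=1$ yields an exact expression for the left-hand side of Theorem~\ref{thm;5} as a Watson-transformed $_4\phi_3$-type sum plus a correction proportional to $1-q^n$. Since $\Phi_n(q)$ divides $1-q^n$ exactly once, this correction contributes precisely one power of $\Phi_n(q)$ and is the ingredient needed to promote the mod-$\Phi_n(q)$ Watson identity to a mod-$\Phi_n(q)^2$ statement. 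I then match both the leading Watson-transformed piece (against the first summand of the bracketed prefactor in Theorem~\ref{thm;5}) and this $\Phi_n(q)$-order correction (against the $h_q(a)$-summand), using $q^n\equiv 1\pmod{\Phi_n(q)}$ together with standard manipulations of $q$-Pochhammer symbols.

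The principal obstacle is precisely this matching step: confirming by careful book-keeping of $q$-series coefficients that the elaborate definition of $h_q(a)$ absorbs exactly the $(1-q^n)$-correction produced by specialising Watson's transformation, modulo $\Phi_n(q)^2$, constitutes the bulk of the computational effort. Once that verification is carried out, the Chinese Remainder Theorem assembles the three congruences into the conclusion of Theorem~\ref{thm;5}.
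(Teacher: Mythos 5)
Your decomposition of the modulus into the pairwise coprime factors $\Phi_n(q)^2$, $a-q^n$, $1-aq^n$ is legitimate, and the two linear-factor verifications are fine: the $h_q(a)$-term carries $(1-aq^n)(a-q^n)$, and Lemma~\ref{lemma1} with $d=4$, $r=1$, $e=d$, $b=1$ handles the rest. The genuine gap is in your treatment of the congruence modulo $\Phi_n(q)^2$. Lemma~\ref{lemma2} is nothing more than the Watson evaluation at $b=q^n$: it asserts that the two sides differ by \emph{some} multiple of $b-q^n$, not an \emph{explicit} one. Hence, after specialising $b=1$, all you know is that the left-hand side equals the $b=1$ value of the right-hand side of \eqref{eq;lemma3} plus $(1-q^n)\,g(a,q)$ with an uncontrolled cofactor $g$. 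That pins the sum down only modulo $\Phi_n(q)$; to get it modulo $\Phi_n(q)^2$ you would need $g$ modulo $\Phi_n(q)$ — essentially the $b$-derivative (divided difference) of the sum at $b=1$ reduced mod $\Phi_n(q)$ — and nothing in your plan produces this. There is therefore nothing concrete to ``match'' against the $h_q(a)$-summand; the bookkeeping you defer is not a verification but the entire missing computation, and it cannot be extracted from Lemma~\ref{lemma2} alone.

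The paper supplies exactly this missing information by \emph{not} discarding the generic-$b$ content of Lemma~\ref{lemma1}. One keeps $b$ as a free parameter, applies the Chinese remainder theorem in $b$ to the pair of congruences from Lemma~\ref{lemma1} (modulo $\Phi_n(q)(1-aq^n)(a-q^n)$) and Lemma~\ref{lemma2} (modulo $b-q^n$), with the interpolation coefficients \eqref{eq1}--\eqref{eq2}, obtaining \eqref{eq;thm9_2} modulo $\Phi_n(q)(1-aq^n)(a-q^n)(b-q^n)$ with the explicit coefficient $A_q(a,b,n)$ of \eqref{eq;proof1}. Only then is $b=1$ substituted: the factor $b-q^n\mapsto 1-q^n$ contributes the second power of $\Phi_n(q)$, and $h_q(a)$ emerges from simplifying $A_q(a,1,n)$ via the mod-$\Phi_n(q)$ reductions \eqref{eq;proof2}--\eqref{eq;proof6} (which is harmless there because that term is multiplied by $[n]$ and carries $(1-aq^n)(a-q^n)$). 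In your scheme you use Lemma~\ref{lemma1} only at $b=1$, which is precisely where its extra information evaporates; without re-introducing the generic-$b$ congruence (or some equivalent computation of the $b$-derivative modulo $\Phi_n(q)$), the modulo-$\Phi_n(q)^2$ step cannot be completed as proposed.
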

\begin{proof}
It is clear that $\Phi_{n}(q)$, $1-aq^{n}$, $a-q^{n}$ and $b-q^{n}$ are pairwise relatively prime polynomials and the following congruences can be easily verified
\begin{equation}
\frac{\left(b-q^{n}\right)\left(a b-1-a^{2}+a q^{n}\right)}{(a-b)(1-a b)} \equiv 1 \quad\left(\bmod \left(1-a q^{n}\right)\left(a-q^{n}\right)\right);\label{eq1}
\end{equation}
\begin{equation}
\frac{\left(1-a q^{n}\right)\left(a-q^{n}\right)}{(a-b)(1-a b)} \equiv 1 \quad\left(\bmod \left(b-q^{n}\right)\right). \label{eq2}
\end{equation}
 Applying Lemma \ref{lemma1} for $d=4$, $r=1$, $e=d$ and Lemma \ref{lemma2} with the Chinese remainder theorem for coprime polynomials, we obtain, modulo $\Phi_{n}(q) (1-aq^{n}) (a-q^{n})  (b-q^{n})$,
\begin{align}
&\sum_{k=0}^{(n-1)/4} [8 k+1]\frac{(q, cq, dq, qa, q/a, q/b;q^4)_k}{(q^4, q^4/c, q^4/d, q^4a,q^4/a, q^4b;q^4)_k} ({cd})^{-k} q^{5 k}b^k \nonumber\\
&\quad\equiv [n]A_q(a,b,n)\sum_{k=0}^{(n-1)/4}\frac{(q^{3}/cd, q/b, aq, q/a;q^4)_k}{(q^4,q^4/c,q^4/d,q^4,q^{2}/b;q^4)_k}q^{4k},\label{eq;thm9_2}
\end{align}
where
\begin{align}
A_q(a,b,n)&=\frac{\left(b-q^{n}\right)\left(a b-1-a^{2}+a q^{n}\right)}{(a-b)(1-a b)} \frac{(b / q)^{(n-1) / 4}\left(q^{2} / b ; q^{4}\right)_{(n-1) / 4}}{\left(b q^{4} ; q^{4}\right)_{(n-1) / 4}}\nonumber\\
&\quad+\frac{\left(1-a q^{n}\right)\left(a-q^{n}\right)}{(a-b)(1-a b)} \frac{\left(q , q^3 ; q^{4}\right)_{(n-1) / 4}}{\left(a q^{4}, q^{4} / a ; q^{4}\right)_{(n-1) / 4}}.\label{eq;proof1}
\end{align}
It is easy to get
\begin{align}
\left(q^{4} / a; q^{4}\right)_{(n-1) / 4} &=\left(1-q^{4} / a\right)\left(1-q^{8} / a\right) \cdots\left(1-q^{n-1} / a\right) \nonumber\\
& \equiv\left(1-q^{4-n} / a\right)\left(1-q^{8-n} / a\right) \cdots\left(1-q^{-1} / a\right)\nonumber\\
&=(-1)^{(n-1)/4}(aq;q^4)_{(n-1)/4}\frac{q^{-(n-3)(n-1)/8}}{a^{(n-1)/4}}\pmod{\Phi_{n}(q)}\label{eq;proof2}.
\end{align}
Similarly, we have
\begin{align}
\left(aq^3 ; q^{4}\right)_{(n-1) / 4}  \equiv(-1)^{(n-1)/4}(q^2/a;q^4)_{(n-1)/4}q^{-(n-1)^2/8}a^{(n-1)/4}\pmod{\Phi_{n}(q)}\label{eq;proof3}.
\end{align}
Note that the following $q$-congruence which has been proposed by Guo\cite[Equation (2.3)]{Guo7}
\begin{align}
(a q ; q)_{n-1}\equiv \sum_{k=0}^{n-1} a^{k} \pmod {\Phi_{n}(q)}.
\end{align}
Then   $\left(a q^{4}, q^{4} / a ; q^{4}\right)_{(n-1) / 4}$ and $\left( q, q^{3}  ; q^{4}\right)_{(n-1) / 4}$ can be transformed as
\begin{align}
\left(a q^{4}, q^{4} / a ; q^{4}\right)_{(n-1) / 4}&\equiv (-1)^{(n-1)/4}\frac{q^{-(n-3)(n-1)/8}}{a^{(n-1)/4}}(aq,aq^4;q^4)_{(n-1)/4}\nonumber\\
&=(-1)^{(n-1)/4}\frac{q^{-(n-3)(n-1)/8}}{a^{(n-1)/4}}\frac{(aq;q)_{n-1}}{(aq^2,aq^3;q^4)_{(n-1)/4}}\nonumber\\
&\equiv q^{(1-n)/4}\frac{(1-a^n)}{(1-a)(aq^2,q^2/a;q^4)_{(n-1)/4}a^{(n-1)/2}}\pmod{\Phi_{n}(q)}\label{eq;proof5}
\end{align}
and
\begin{align}
\left( q, q^{3}  ; q^{4}\right)_{(n-1) / 4}&= \frac{(q;q)_{n-1}}{(q^2,q^4;q^4)_{(n-1)/4}}
\equiv \frac{n}{(q^2,q^4;q^4)_{(n-1)/4}} \pmod{\Phi_{n}(q)}\label{eq;proof6}.
\end{align}
Combine the $q$-congruences \eqref{eq;proof5} and \eqref{eq;proof6},  $A_q(a,b,n)$ reduces to the following result with $b=1$,
\begin{align}
A_q(a,1,n)&=q^{(1-n) / 4}\frac{\left(q^{2 } ; q^{4}\right)_{(n-1) / 4}}{\left(q^{4} ; q^{4}\right)_{(n-1) / 4}} + q^{(1-n)/4}\frac{h_q(a)}{(1-a)^2(1-a^n)(q^2,q^4;q^4)_{(n-1)/4}}.
\end{align}
Thus we arrive at Theorem \ref{thm;5} instantly after taking $b=1$ in both sides of \eqref{eq;thm9_2}.
\end{proof}
\begin{proof}[Proof of Theorem \ref{thm;1}]
We first need to compute the 3rd derivative of $h_q(a)$ at $a=1$ as
\begin{equation}
h_q^{\prime\prime\prime}(1)=(q^2;q^4)_{(n-1)/4}^2(1-q^n)^2\left\{-\frac{n(n^2-1)}{4}-6n\sum_{k=1}^{(n-1)/4}\frac{q^{4k-2}}{(1-q^{4k-2})^2}\right\}.
\end{equation}
According to  the L'Hospital rule, we have
\begin{align}
\lim _{a \rightarrow 1}\frac{h_q(a)}{(1-a^2)(1-a^n)(q^2,q^4;q^4)_{(n-1)/4}}
&=\frac{h_q^{\prime\prime\prime}(1)}{-6n(q^2,q^4;q^4)_{(n-1)/4}}\nonumber\\
&=\frac{\left(q^{2 } ; q^{4}\right)_{(n-1) / 4}}{\left(q^{4} ; q^{4}\right)_{(n-1) / 4}}[n]^2 B(n,q).\label{eq;proof7}
\end{align}
Owing to the fact that the denominators of both sides of \eqref{eq;thm5} as $a\rightarrow1$ are prime to $\Phi_n(q)^2$.
Combine with the result \eqref{eq;proof7}, the $q$-congruence \eqref{eq;thm1_1} for $M=(n-1)/4$  modulo $\Phi_{n}(q)^4$ is thus a direct conclusion from Theorem \ref{thm;5} by letting $a\rightarrow1$. On the other hand, since $(q;q^4)_k^4\equiv0 \pmod {\Phi_n(q)^4}$ for $k$ in the range $(n-1)/4\le k \le n-1$,  Theorem \ref{thm;1} is also true for $M=n-1$ modulo $\Phi_{n}(q)^4$. At the same time,  from Lemma \ref{Lemma2} for $d=4$, $r=a=b=1$ and $e=d$, the $q$-congruence \eqref{eq;thm1_1} is also valid modulo $[n]$. This completes the proof Theorem \ref{thm;1}.
\end{proof}

\section{Proof of Theorem \ref{thm;4} }

In order to prove Theorem \ref{thm;4}, we first need to establish the following $q$-congruence, which is the case of \cite[Theorem 4.5]{GuoZu} with $b\to q/b$ and $r=1$.
\begin{lemma}
Let $n\equiv 1 \pmod4$ be positive integer. Then, modulo $[n](a-q^n)(1-aq^n)$,
\begin{align}
\sum_{k=0}^{(n-1)/2} [6 k+1] \frac{(aq, q/a, q ; q^{2})_{k}\left(q/b ; q^{4}\right)_{k} q^{k^{2}+k}b^{k}}{(a q^{4},q^{4} / a, q^{4}; q^{4})_{k}\left(q^{2}  b ; q^{2}\right)_{k} }\equiv  \frac{\left(q^{2 }/b ; q^{4}\right)_{(n-1) / 4}}{\left(bq^{4} ; q^{4}\right)_{(n-1) / 4}}[n] (\frac{b}{q})^{(n-1) / 4}.\label{eq;proof9}
\end{align}
\end{lemma}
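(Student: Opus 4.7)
The plan is to obtain the lemma as the direct specialization of \cite[Theorem 4.5]{GuoZu} pointed out in the text, namely by setting $r=1$ and replacing $b$ by $q/b$ in that theorem. Because the assertion is labelled as ``the case of'' a known result, no essentially new idea is required; what is needed is a careful bookkeeping of the substitution.

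First I would restate \cite[Theorem 4.5]{GuoZu} in its general parametric form, a $q$-congruence modulo $[n](a-q^n)(1-aq^n)$ for a very-well-poised $_{6}\phi_{5}$-type sum in parameters $a$, $b$, $r$, valid under a congruence condition $n \equiv r \pmod{4}$ (which matches our hypothesis $n \equiv 1 \pmod{4}$ precisely when $r = 1$). With that statement in hand, I would verify that setting $r=1$ produces the prefactor $[6k+1]$ and the $q^{k^{2}+k}$-weight on the summand that appear in the present lemma.

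Next I would perform the substitution $b \mapsto q/b$ block by block. The numerator factor $(q^{r}/b;q^{4})_{k}$ at $r=1$ becomes $(b;q^{4})_{k}$, while the denominator factor $(bq^{4};q^{4})_{k}$ becomes $(q^{5}/b;q^{4})_{k}$; combining these with an auxiliary splitting such as $(q^{2}b;q^{2})_{k}=(q^{2}b;q^{4})_{k}(q^{4}b;q^{4})_{k}$, the remaining $q$-Pochhammer blocks reorganize into the shape $(q/b;q^{4})_{k}/(q^{2}b;q^{2})_{k}$ written in \eqref{eq;proof9}. The $b^{k}$-weight tracks through correctly because the original theorem carries a factor $(b/q^{r})^{k}$ that specializes to $(b/q)^{k}$ at $r=1$ and becomes $b^{k}/q^{k}$ after $b\mapsto q/b$, which merges with the other $q$-powers in the summand to produce the stated $q^{k^{2}+k}b^{k}$. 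The prefactor on the right transforms analogously: the image of $(q^{2r}/b;q^{4})_{(n-r)/4}/(bq^{4};q^{4})_{(n-r)/4}$ is exactly $(q^{2}/b;q^{4})_{(n-1)/4}/(bq^{4};q^{4})_{(n-1)/4}$, and the scalar $(b/q^{r})^{(n-r)/4}$ becomes $(b/q)^{(n-1)/4}$.

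The principal obstacle is thus the combinatorial tracking rather than any analytic difficulty: one must confirm that the splittings of $q$-Pochhammer symbols between the moduli $q^{2}$ and $q^{4}$ are consistent on both sides, and that the various powers of $q$ and $b$ collapse to precisely the weights $q^{k^{2}+k}b^{k}$ on the left and $(b/q)^{(n-1)/4}$ on the right. Once this matching is confirmed, the modulus $[n](a-q^{n})(1-aq^{n})$ is inherited verbatim from \cite[Theorem 4.5]{GuoZu}, and the lemma follows.
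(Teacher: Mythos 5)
Your overall strategy is exactly the paper's: the paper gives no independent argument for this lemma, it simply quotes \cite[Theorem 4.5]{GuoZu} specialized by $r=1$ and $b\mapsto q/b$, which is what you propose. The problem is that the bookkeeping you describe would not close as written. In the cited theorem the free parameter enters as in Rahman's quadratic summation (compare the specialization $q\to q^2$, $a=q$, $d=q/a$ used for the next lemma of the paper), so at $r=1$ the summand carries the block $(b;q^{4})_{k}/(q^{3}/b;q^{2})_{k}$ together with the weight $q^{k^{2}+2k}b^{-k}$, and the single substitution $b\mapsto q/b$ sends these directly to $(q/b;q^{4})_{k}/(q^{2}b;q^{2})_{k}$ and $q^{k^{2}+k}b^{k}$; the right-hand side transforms in the same direct way, and the modulus $[n](a-q^{n})(1-aq^{n})$ is untouched. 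Your version instead starts from a numerator $(q^{r}/b;q^{4})_{k}$, whose image under $b\mapsto q/b$, $r=1$ is $(b;q^{4})_{k}$, not the required $(q/b;q^{4})_{k}$; you then try to repair the mismatch with the splitting $(q^{2}b;q^{2})_{k}=(q^{2}b;q^{4})_{k}(q^{4}b;q^{4})_{k}$, which is false (that product is $(q^{2}b;q^{2})_{2k}$), and with the claim that $(b/q)^{k}$ becomes $b^{k}q^{-k}$ under $b\mapsto q/b$, whereas it becomes $b^{-k}$; likewise the stated image of the prefactor $(q^{2r}/b;q^{4})_{(n-r)/4}/(bq^{4};q^{4})_{(n-r)/4}$ is not what the substitution produces. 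So the idea (pure specialization of the cited theorem) is correct and coincides with the paper's, but to make it a proof you must transcribe the actual statement of \cite[Theorem 4.5]{GuoZu} and apply the substitution consistently; no reshuffling between the moduli $q^{2}$ and $q^{4}$ is needed at all.
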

We also find a $q$-congruence for the left-hand side of \eqref{eq;proof9} modulo $b-q^n$.
\begin{lemma}
Let $n\equiv 1 \pmod4$ be positive integer. Then, modulo $b-q^n$,
\begin{align}
\sum_{k=0}^{(n-1)/2} [6 k+1]  \frac{(aq, q/a, q ; q^{2})_{k}\left(q/b ; q^{4}\right)_{k} q^{k^{2}+k}b^{k}}{(a q^{4},q^{4} / a, q^{4} ; q^{4})_{k}\left(q^{2}b ; q^{2}\right)_{k} }\equiv[n]\frac{\left(q ,q^{3}; q^{4}\right)_{(n-1) / 4}}{\left(aq^{4},q^4/a ; q^{4}\right)_{(n-1) / 4}}.\label{eq;proof10}
\end{align}
\end{lemma}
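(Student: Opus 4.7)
Since the modulus $b-q^n$ is linear in $b$, the plan is to substitute $b=q^n$ and verify the resulting identity. Under this substitution the factor $(q/b;q^4)_k=(q^{1-n};q^4)_k$ vanishes for $k>N:=(n-1)/4$, truncating the sum at $k=N$, and $b^k=q^{nk}$ combines with the original $q$-weight to yield $q^{k^2+(n+1)k}$ in the $k$-th summand. The problem reduces to the closed-form evaluation
\[
\sum_{k=0}^{N}[6k+1]\,\frac{(aq,q/a,q;q^{2})_{k}(q^{1-n};q^{4})_{k}}{(aq^{4},q^{4}/a,q^{4};q^{4})_{k}(q^{n+2};q^{2})_{k}}\,q^{k^{2}+(n+1)k}=[n]\,\frac{(q,q^{3};q^{4})_{N}}{(aq^{4},q^{4}/a;q^{4})_{N}}.
\]

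Mirroring the proof of Lemma~\ref{lemma2}, I would realize this as a specialization of Watson's ${}_{8}\phi_{7}$ transformation \eqref{eq:8phi7}. Taking $q\mapsto q^{4}$ with Watson's parameters $d=q/a$ and $e=aq$ (so that $de=q^{2}$, matching the choice in Lemma~\ref{lemma2}) produces the prefactor
\[
\frac{(q^{5},q^{3};q^{4})_{N}}{(aq^{4},q^{4}/a;q^{4})_{N}}=[n]\,\frac{(q,q^{3};q^{4})_{N}}{(aq^{4},q^{4}/a;q^{4})_{N}},
\]
where I used $(q^{5};q^{4})_{N}=[n](q;q^{4})_{N}$ for $N=(n-1)/4$, already exploited in the proof of Lemma~\ref{lemma2}. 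The remaining Watson parameters $b,c$ should be chosen so that the ${}_{8}\phi_{7}$ on Watson's left reproduces the truncated sum above while the accompanying ${}_{4}\phi_{3}$ collapses to $1$, either by a Karlsson--Minton style cancellation or by a terminating $q$-Pfaff--Saalsch\"utz summation.

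The technical obstacle is that a straight $q\mapsto q^{4}$ specialization of Watson with $a=q$ has leading factor $[8k+1]$ and a $z^{k}$ weight linear in $k$, whereas our sum carries $[6k+1]$ together with a quadratic weight $q^{k^{2}+(n+1)k}$. To reconcile these, I would first reverse the order of summation via the identity
\[
(q^{1-n};q^{4})_{k}=(-1)^{k}\,q^{2k^{2}-4Nk-2k}\,\frac{(q^{4};q^{4})_{N}}{(q^{4};q^{4})_{N-k}},
\]
under which the total $q$-weight in the $k$-th summand consolidates to $q^{3k^{2}}$, exposing the cubic very-well-poised shape characteristic of $q$-analogues carrying a $(6k+1)$-factor. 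The reversed sum is then matched to Watson (or to an auxiliary cubic $q$-identity of Gasper--Rahman type) with the appropriate $b,c$; executing this matching and checking that the residual ${}_{4}\phi_{3}$ evaluates to $1$ is the delicate step, after which translation back through $(q^{5};q^{4})_{N}=[n](q;q^{4})_{N}$ yields the stated congruence.
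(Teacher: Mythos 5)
Your reduction is fine as far as it goes: since the modulus $b-q^n$ is linear, it suffices to verify the identity at $b=q^n$, and you correctly observe that $(q^{1-n};q^4)_k$ truncates the sum at $k=(n-1)/4$ and that the weight becomes $q^{k^2+(n+1)k}$. But the heart of the lemma --- the closed-form evaluation of that truncated sum --- is exactly what you leave unexecuted, and the route you sketch is not the right one. The left-hand side is a \emph{mixed-base} quadratic/cubic series: the numerator carries $(aq,q/a,q;q^2)_k$ and $(q/b;q^4)_k$, the denominator mixes bases $q^2$ and $q^4$, the linear factor is $[6k+1]$, and the weight is genuinely quadratic in $k$. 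This is not the shape of a very-well-poised ${}_8\phi_7$ in the single base $q^4$, so a specialization of Watson's transformation (the tool used for Lemma 2, where the $[8k+1]$ factor and the base-$q^4$ structure do match) cannot be made to fit by choosing the remaining parameters; your hope that the companion ${}_4\phi_3$ ``collapses to $1$'' by Karlsson--Minton or $q$-Pfaff--Saalsch\"utz is asserted, not demonstrated, and the series-reversal step producing a ``$q^{3k^2}$ weight'' is likewise only a heuristic pointer.

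What actually closes the gap is a single application of Rahman's quadratic--cubic summation formula (Rahman, eq.\ (4.6)), which the paper invokes: with $q\to q^2$, $a=q$, $d=q/a$ and $b=q^{1-n}$ its left-hand side becomes precisely your truncated sum (the factor $(1-aq^{3k})/(1-a)$ yields $[6k+1]$, $(b;q^2)_k\to(q^{1-n};q^4)_k$, and $a^kq^{\binom{k+1}{2}}/b^k\to q^{k^2+(1+n)k}$), while its infinite-product right-hand side terminates and simplifies to $[n](q,q^3;q^4)_{(n-1)/4}/(aq^4,q^4/a;q^4)_{(n-1)/4}$. Your closing remark about ``an auxiliary cubic $q$-identity of Gasper--Rahman type'' gestures in this direction, but without identifying and applying that summation the proof is incomplete: the ``delicate step'' you defer is the entire content of the lemma.
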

\begin{proof}[Sketch of proof.]
Using the following summation formula of Rahman\cite[eq. (4.6)]{Rahman}
\begin{align}
\sum_{k=0}^{\infty} \frac{\left(1-a q^{3 k}\right)(a, d, q / d ; q)_{k}\left(b ; q^{2}\right)_{k}}{(1-a)(q^{2}, a q^{2}/d, adq ; q^{2})_{k}(aq/b ; q)_{k}} \frac{a^{k} q^{k+1\choose 2}}{b^{k}} =\frac{\left(aq, aq^{2}, adq/b, aq^{2}/bd ; q^{2}\right)_{\infty}}{\left(aq/b, aq^{2}/b, aq^{2}/d, adq; q^{2}\right)_{\infty}}
\end{align}
with $q\to q^2$, $a=q$, $d=q/a$ and $b=q^{1-n}$, we just get the $q$-congruence \eqref{eq;proof10} with $b=q^n$.
\end{proof}
\begin{proof}[Proof of Theorem \ref{thm;4}]
Combining the above two lemmas and the $q$-congruences \eqref{eq1}, \eqref{eq2} with the Chinese remainder theorem for coprime polynomials, we are led to the following result: modulo $[n] (1-aq^{n}) (a-q^{n})  (b-q^{n})$,
\begin{align}
\sum_{k=0}^{(n-1)/2} [6 k+1]  \frac{(a q,q / a ; q^{2})_{k}\left(q ; q^{2}\right)_{k}\left(q/b ; q^{4}\right)_{k} q^{k^{2}+k}b^{k}}{(a q^{4},q^{4} / a ; q^{4})_{k}\left(q^{4} ; q^{4}\right)_{k}\left(q^{2}  b ; q^{2}\right)_{k} }\equiv [n] A_q(a,b,n).\label{eq;proof11}
\end{align}
In the same manner as the proof of Theorem \ref{thm;1}, taking $b\rightarrow  1$ and $a\rightarrow 1$ in sequence, we
immediately arrive at Theorem \ref{thm;4} from \eqref{eq;proof11}.
\end{proof}
\section{The other $q$-analogues of Swisher's supercongruence \eqref{eq:swisher1}}
From Theorem \ref{thm;1}, we can get another five different $q$-analogues of Swisher's supercongruence \eqref{eq:swisher1}.
Fixing $c=d=-1$; $c=-1$, $d\rightarrow\infty$; $c=-1$, $d=0$;
$c\rightarrow\infty$, $d\rightarrow\infty$ and $c\rightarrow0$,
$d\rightarrow0$ respectively in Theorem \ref{thm;1},
we obtain the following results.
\begin{corollary}
Let $n\equiv 1 \pmod4$ be positive integer. Then, modulo $[n]\Phi_n(q)^3$,
\begin{align}
&\sum_{k=0}^{M}[8 k+1] \frac{(q ; q^{4})_k^2(q^2;q^8)_k^2}{(q^{4} ; q^{4})_k^{2}(q^8;q^8)_{k}^2} q^{5k}\nonumber\\
 &\equiv\frac{\left(q^{2 } ; q^{4}\right)_{(n-1) / 4}}{\left(q^{4} ; q^{4}\right)_{(n-1) / 4}}[n] q^{(1-n) / 4}\big\{1+[n]^2B(n,q)\big\}
\sum_{k=0}^{M}\frac{(q^{3};q^4)_k(q;q^4)_k^3}{(q^4;q^4)_k(-q^4;q^4)_k^2(q^{2};q^4)_k}q^{4k};\label{c-1}\\
&\sum_{k=0}^{M}[8 k+1] \frac{\left(q ; q^{4}\right)_{k}^3(q^2;q^8)_{k}}{\left(q^{4} ; q^{4}\right)_{k}^{3}(q^8;q^8)_{k}} q^{2k^2+4k}\nonumber\\ &\equiv\frac{\left(q^{2 } ; q^{4}\right)_{(n-1) / 4}}{\left(q^{4} ; q^{4}\right)_{(n-1) / 4}}[n] q^{(1-n) / 4}\big\{1+[n]^2B(n,q)\big\}
\sum_{k=0}^{M}\frac{(q;q^4)_k^3}{(q^4;q^4)_k(-q^4;q^4)_k(q^{2};q^4)_k}q^{4k};\\
&\sum_{k=0}^{M}[8 k+1] \frac{\left(q ; q^{4}\right)_{k}^3(q^2;q^8)_{k}}{\left(q^{4} ; q^{4}\right)_{k}^{3}(q^8;q^8)_{k}} q^{3k-2k^2}\nonumber\\ &\equiv\frac{\left(q^{2 } ; q^{4}\right)_{(n-1) / 4}}{\left(q^{4} ; q^{4}\right)_{(n-1) / 4}}[n] q^{(1-n) / 4}\big\{1+[n]^2B(n,q)\big\}
\sum_{k=0}^{M}\frac{(q;q^4)_k^3}{(q^4;q^4)_k(-q^4;q^4)_k(q^{2};q^4)_k}(-q)^{3k};\\
&\sum_{k=0}^{M}[8 k+1] \frac{\left(q ; q^{4}\right)_{k}^4}{\left(q^{4} ; q^{4}\right)_{k}^{4}} q^{4k^2+3k}\nonumber\\ &\equiv\frac{\left(q^{2 } ; q^{4}\right)_{(n-1) / 4}}{\left(q^{4} ; q^{4}\right)_{(n-1) / 4}}[n] q^{(1-n) / 4}\big\{1+[n]^2B(n,q)\big\}
\sum_{k=0}^{M}\frac{(q;q^4)_k^3}{(q^4;q^4)_k(q^{2};q^4)_k}q^{4k};\\
&\sum_{k=0}^{M}[8 k+1] \frac{\left(q ; q^{4}\right)_{k}^4}{\left(q^{4} ; q^{4}\right)_{k}^{4}} q^{k-4k^2}\nonumber\\ &\equiv\frac{\left(q^{2 } ; q^{4}\right)_{(n-1) / 4}}{\left(q^{4} ; q^{4}\right)_{(n-1) / 4}}[n] q^{(1-n) / 4}\big\{1+[n]^2B(n,q)\big\}
\sum_{k=0}^{M}\frac{(q;q^4)_k^3}{(q^4;q^4)_k(q^{2};q^4)_k}(-q)^{2k-2k^2}.\label{c-5}
\end{align}
\end{corollary}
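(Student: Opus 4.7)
The plan is to derive all five $q$-congruences stated in the corollary as direct specializations of the parametric identity in Theorem \ref{thm;1}. Since that theorem is a congruence modulo $[n]\Phi_n(q)^3$ — a polynomial in $q$ that does not involve $c$ or $d$ — any substitution or limit of the two parameters that leaves both sides well defined will automatically yield a congruence with the same modulus. Accordingly, I would handle the five admissible pairs
\[
(c,d)\in\{(-1,-1),\;(-1,\infty),\;(-1,0),\;(\infty,\infty),\;(0,0)\}
\]
one at a time.

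For the finite substitution $c=d=-1$ I would simply plug in and apply the two elementary collapsings
\[
(q;q^{4})_{k}(-q;q^{4})_{k}=(q^{2};q^{8})_{k},\qquad (q^{4};q^{4})_{k}(-q^{4};q^{4})_{k}=(q^{8};q^{8})_{k},
\]
which convert the left-hand side of Theorem \ref{thm;1} into the left-hand side of \eqref{c-1}; on the right-hand side the factor $(q^{3}/cd;q^{4})_k$ becomes $(q^{3};q^{4})_{k}$, yielding \eqref{c-1}. For the four limiting cases I would use the standard asymptotic
\[
(a;q^{4})_{k}\;\sim\;(-a)^{k}q^{2k(k-1)} \qquad\text{as } a\to\infty,
\]
applied to $(cq;q^{4})_k$ or $(qd;q^{4})_k$ when $c$ or $d$ tends to $\infty$, and to the ``dual'' factors $(q^{4}/c;q^{4})_k$, $(q^{4}/d;q^{4})_k$, and $(q^{3}/cd;q^{4})_k$ when a parameter tends to $0$ (so that $q^{4}/c$ or $q^{3}/cd$ blows up). In each case the $(cd)^{-k}$ prefactor on the left and the $(q^{3}/cd;q^{4})_k$ factor on the right supply compensating powers of $c$ and $d$, so that after cancellation one is left with purely $q$-dependent quadratic-in-$k$ exponents; these are precisely the exponents $q^{2k^{2}+4k}$, $q^{3k-2k^{2}}$, $q^{4k^{2}+3k}$, and $q^{k-4k^{2}}$ appearing in the statement.

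The main obstacle is purely bookkeeping: carefully tracking the signs, the powers of $c$ and $d$ that cancel, and the resulting quadratic shifts in the exponent of $q$, so that each of the five specialized pairs of sums matches term-by-term with the corresponding equation in the corollary. No further modular reasoning is needed, since the ambient congruence modulus $[n]\Phi_{n}(q)^{3}$ is inherited directly from Theorem \ref{thm;1}; the freedom in the truncation index $M\in\{(n-1)/4,\,n-1\}$ is likewise inherited unchanged, so both possible truncations in the corollary follow at once.
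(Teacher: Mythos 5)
Your proposal is correct and is precisely the paper's own argument: the authors obtain \eqref{c-1}--\eqref{c-5} exactly by the five specializations $(c,d)=(-1,-1)$, $(-1,\infty)$, $(-1,0)$, $(\infty,\infty)$, $(0,0)$ in Theorem \ref{thm;1}, with the same collapsing identities and limiting asymptotics that you describe. One caution when you carry out the bookkeeping: in the case $c\to 0$, $d\to 0$ the right-hand sum acquires the factor $(-q)^{k-2k^2}$ rather than the printed $(-q)^{2k-2k^2}$ in \eqref{c-5}, which appears to be a misprint in the paper rather than a flaw in your method.
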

Taking $q\rightarrow1$ and $n=p\equiv1 \pmod 4$ be a prime in \eqref{c-1}--\eqref{c-5} and the two $q$-supercongruences \eqref{e1}--\eqref{5} in Theorem \ref{thm;e}, and applying the known congruence by Sun \cite[p.~7]{Sun}:
\[H_{(p-1) / 2}^{(2)} \equiv 0 \pmod p \quad \text { for } \quad p>3,\] we are led to the following supercongruence
\begin{align}
\sum_{k=0}^{m}(8k+1)\frac{(\frac{1}{4})_k^4}{k!^4}
\equiv\frac{\left(\frac{1}{2}\right)_{(p-1) / 4}}{(1)_{(p-1) / 4}} p\left\{1-\frac{p^{2}}{16} H_{(p-1) / 4}^{(2)}\right\} \pmod {p^{4}},
\end{align}
where $m=(p-1)/4$ or $p-1$ and the harmonic numbers of $2$-order is defined as
\[H_{n}^{(2)}=\sum_{k=1}^{n} \frac{1}{k^{2}} \quad \text { with } \quad n \in \mathbb{N}.\]

Recall that for prime $p\ge 5$ and rational number $a\in   \mathbb{Z}_p$, the $p$-adic Gamma function $\Gamma_{p}$ has the following basic properties \cite{Long},
\begin{equation*}
\Gamma_{p}(1)=-1,\quad \Gamma_{p}\left(\frac{1}{2}\right)^{2}=(-1)^{\frac{p+1}{2}}\quad
(a)_n=(-1)^n\frac{\Gamma_{p}(a+n)}{\Gamma_{p}(a)},\quad G_1(a)=G_1(1-a),
\end{equation*}
\begin{equation*}
\Gamma_{p}(a+bp)\equiv \Gamma_{p}(a)\big(1+G_1(a)bp+\frac{G_2(a)b^2p^2}{2}\big) \pmod{p^3},
\end{equation*}
\begin{equation*}
\Gamma_{p}(x) \Gamma_{p}(1-x)=(-1)^{a_{0}(x)},
\end{equation*}
where $G_{k}(a):=\Gamma_{p}^{(k)}(a) / \Gamma_{p}(a)$, $\Gamma_{p}^{(k)}(a)$ is $k$-th derivative of $\Gamma_{p}(a)$ and $a_{0}(x) \in\{1,2, \cdots, p\}$ satisfies $x-a_{0}(x) \equiv 0 \pmod p$.
We have the following result
\begin{align}
\frac{(\frac{1}{2})_{(p-1)/4}}{(1)_{(p-1)/4}}&=\frac{\Gamma_{p}({(1+p)/4})\Gamma_{p}{(1)}}  {\Gamma_{p}{(1/2)}\Gamma_{p}({(3+p)/4})}\nonumber\\
&=-(-1)^{(p-1)/4}\Gamma_{p}{(1/2)}\Gamma_{p}({(1+p)/4})\Gamma_{p}({(1-p)/4})\nonumber\\
&\equiv -(-1)^{(p-1)/4}\Gamma_{p}{(1/2)} \left\{\Gamma_{p}(1 / 4)+\Gamma_{p}^{\prime}(1 / 4) \frac{p}{4}+\Gamma_{p}^{\prime \prime}(1 / 4) \frac{p^{2}}{2 \times 4^{2}}\right\}\nonumber\\
& \times \left\{\Gamma_{p}(1 / 4)-\Gamma_{p}^{\prime}(1 / 4) \frac{p}{4}+\Gamma_{p}^{\prime \prime}(1 / 4) \frac{p^{2}}{2 \times 4^{2}}\right\}\pmod {p^3}.\label{eq;proof12}
\end{align}

Recently, Wang and Pan \cite[Page 6]{WP} have proved that
\begin{align}
H_{(p-1) / 4}^{(2)} \equiv \frac{\Gamma_{p}^{\prime \prime}(1 / 4)}{\Gamma_{p}(1 / 4)}-\left\{\frac{\Gamma_{p}^{\prime}(1 / 4)}{\Gamma_{p}(1 / 4)}\right\}^{2} \pmod p. \label{eq;proof13}
\end{align}
Thus, the combination of \eqref{eq;proof12} and \eqref{eq;proof13} can bring out
\begin{align*}
p\frac{\left(\frac{1}{2}\right)_{(p-1) / 4}}{(1)_{(p-1) / 4}} \left\{1-\frac{p^{2}}{16} H_{(p-1) / 4}^{(2)}\right\} \equiv -(-1)^{(p-1)/4}p\Gamma_{p}{(1/2)}\Gamma_{p}(1/4)^2 \pmod {p^4}.
\end{align*}
This shows that our $q$-congruences \eqref{e1}, \eqref{5} and \eqref{c-1}--\eqref{c-5} are indeed $q$-analogues of Swisher's supercongruence \eqref{eq:swisher1}.

\end{document}